\newtheorem{theorem}{Theorem}
\newtheorem{lemma}[theorem]{Lemma}
\newtheorem{corollary}[theorem]{Corollary}
\theoremstyle{definition}
\title{\AE quitas: Two-Player Counterfeit Coin Games}
\author{Kyle Burke, Tanya Khovanova, Joshua Lee,\\ Richard J. Nowakowski, Amelia Rowland, Craig Tennenhouse}
\begin{document}
\maketitle

\begin{abstract}
We discuss games involving a counterfeit coin. Given one counterfeit coin among a number of otherwise identical coins, two players with full knowledge of the fake coin take turns weighing coins on a two-pan scale, under the condition that on every turn they reveal some information to an Observer with limited prior knowledge about the coins. We study several games depending on the types of the counterfeit coin, the end-game condition, and the availability of extra genuine coins. We present the winning positions and the Grundy numbers for these games.
\end{abstract}

\section{Introduction}

The problem of finding the counterfeit coin in a collection of indistinguishable coins has been extensively studied. See \cite{GuyN} for a dated but useful overview of the different aspects of the problem. 
%What's the best (or earliest) survey of these?  Is it Mauldon's 1978 paper?
Inspired by ideas of cryptography, new coin problems where the coins have a lawyer and want to limit the information that can be revealed during weighing were recently introduced \cite{DK}.

We are turning these new ideas into games. We study several games with the same initial set-up.

\begin{quote}
There are $N$ identically-looking coins, where one is counterfeit and the rest are real. All real coins weigh the same. There is a balance scale with two pans. During a weighing, the same number of coins is put on each pan. 

There is an Observer who wants to find the counterfeit coin. The Observer is infinitely intelligent and watches all the weighings.

Two players play the following game. The players have full information about coins and take turns weighing coins.  Each move consists of one weighing, given that the Observer gains some new information.
\end{quote}

We study several variations of this game depending on the types of the counterfeit coin, the end-game condition, and the availability of extra genuine coins.

In most coin problems, the counterfeit coin is assumed to be lighter for an obvious reason: in real life counterfeit coins are lighter. Mathematicians like symmetry, however, and consider the cases of the coin being lighter or heavier as equivalent. For many reasons, which will be revealed later, it is useful to combine lighter and heavier cases as follows:

Suppose we can split our coins into two groups. The Observer doesn't know where the counterfeit coin is, but she knows that if the counterfeit coin is in the first group it must be lighter, and if it is in the second group it must be heavier. 

We are ready for some definitions. 

We define a \textit{destiny} of a coin to be heavy or light. We consider a game, which we denote a $D_{l,h}$ game, where $l$ coins have a lighter destiny and $h$ coins have a heavier destiny. The total number of coins is $N=l+h$. If the counterfeit coin ends up being among the $l$ coins destined to be lighter, the counterfeit coin is lighter and vice versa.

The particular case when all the coins are destined to be light corresponds to the most famous classical case: there are several coins, and the counterfeit coin is known to be lighter than the others.
In our notation, it is a $D_{N,0}$ game. 

Now we are ready to describe the cases we study.

\begin{enumerate}
\item \textbf{The case of the destined coins.} The Observer knows about every coin's destiny and wants to \textit{find} the counterfeit coin. Automatically, as soon as the coin is found, the Observer would know whether it is heavier or lighter.
\item \textbf{The case of the unknown coins.} The Observer doesn't know whether the fake coin is heavier or lighter. There are two subcases: the Observer just wants to \textit{find} the coin, or the Observer wants not only to find it but also to \textit{identify} whether it is heavier or lighter.
\end{enumerate}

We promised to reveal another reason why the $D_{l,h}$ game is important other than that it is a generalization of $D_{N,0}$. When the counterfeit coin could be either heavier or lighter, and there is an unbalanced weighing, all the coins on the scale acquire a destiny. If the counterfeit coin is on the lighter pan, it has to be lighter and vice versa.

We call a coin \textit{excluded} if it is proven to be real after some weighings. These are the coins that were:
\begin{itemize}
    \item at least once on the scale when the weighing was balanced, or
    \item coins with a lighter destiny that were on a heavier pan, or
    \item coins with a heavier destiny that were on a lighter pan, or
    \item coins that were not on the scale during an imbalance.
\end{itemize}

We call a coin \textit{unknown} if it is not excluded and not destined. That means this coin theoretically can be counterfeit and either lighter or heavier.

\textbf{End-game.} We study different end-game conditions. The names for these come straight from the combinatorial game theory terms \emph{normal play} and \emph{mis\`ere play}.

\begin{enumerate}
\item The game ends when the Observer reaches her goal of finding the information about the counterfeit coin. The player who reveals the last piece of necessary info wins. We call this game the \textit{normal play}.
\item The player that reveals the full information to the Observer loses. That means the game ends, when on the next turn, the full information is guaranteed to be revealed. We call this game the \textit{mis\`ere play}.
\end{enumerate}

\textbf{Extra coin.} It is also convenient to have another variation:

\begin{itemize}
\item At the beginning of the game, an extra coin is available that the Observer and both players know is not counterfeit.  We include a superscripted $+$ for these games, so a normal play game with $N=l+h$ destined coins and one extra coin is denoted $D^+_{l, h}$.
\end{itemize}

The extra coin will be important. It simplifies the calculations. In addition, as soon as some coins are excluded, this situation arises automatically.

It might be surprising, but the case of the lighter coin and the case of the destined coins are almost identical. That is, the Grundy numbers and $\mathcal{P}$-positions depend on the total number of coins with few exceptions. This is covered in  Section~\ref{sec:li}, where we provide the result of the investigation of the case of the destined coins. 

In Section~\ref{sec:hlf} we investigate the case of the unknown coins. We find that in the presence of an extra coin, the find and find-identify cases are almost identical. If there is no extra coin, then only for mis\`ere play the cases of find and find-identify are almost identical.

\subsection{Sprague-Grundy Theory for Normal and Mis\`ere play, Impartial Games}\label{sec:nimvalues}

\textit{Combinatorial games} are played by two players who move alternately; 
the game finishes after a finite sequence of moves regardless of the order of play;
there is perfect information; and there are no chance devices. The two players are called \textit{Left} and \textit{Right}.
The \textit{Fundamental Theorem of Combinatorial Games} (see \cite{LiP}[Theorem 2.1]) gives that there are four outcome classes: 
 $\mathcal{L}$---Left can force a win regardless of moving first or second; 
  $\mathcal{R}$---Right can force a win regardless of moving first or second; 
   $\mathcal{N}$---the next player to play can force a win regardless of whether it is Left or Right; 
   and  $\mathcal{P}$---the next player to play cannot force a win regardless of whether it is Left or Right.

Let $G$ be a position (in a game). The \textit{Left options} of $G$ are those positions that Left can move to (in one move) and 
the \textit{Right options} are defined analogously. Let $G^\mathcal{L}$ and $G^\mathcal{R}$ be the sets of Left and Right 
options of $G$. Now $G$ can be identified with the sets of options, written 
$G=\{ G^\mathcal{L}\mid  G^\mathcal{R}\}$.
In this paper, we consider \textit{impartial} games, that is, the Left and Right options 
for any possible position of the game are identical---the two players always have the 
same options. As a consequence, the only outcome classes are $\mathcal{N}$ and $\mathcal{P}$. 
Since $G^\mathcal{L}=G^\mathcal{R}$ we need only write them once, as in $G=\{H\mid H \mbox{ is an option of $G$}\}$.

The minimum excluded value, \textit{mex}, of a set $S$ is the least non-negative integer not included in $S$. 

For normal play, recursively, the \textit{Grundy value}\footnote{The theory of impartial games was developed before that of partizan games, of which impartial games are a subset. The correspondence between them is: if $\mathcal{G}(G) =n$ then in partizan games
 the value of $G$, is denoted $*n$.}, of an impartial game $H$ 
 is given by 
 $$\mathcal{G}(H)=\mbox{mex}\{ \mathcal{G}(H')\mid H' \mbox{ is an option of } H\}.$$
Thus, if a game has no options, it has nim-value 0. There is a link between nim-values and outcomes:
$$\mathcal{G}(G) = 0 \mbox{ if and only if $G$ is a $\mathcal{P}$-position}.$$
The \textit{disjunctive sum} of two positions, $G$ and $H$, written $G+H$ is the position in which a player plays in either $G$ or $H$ but not both. 
 Let $p$ and $q$ be  non-negative integers then 
  $p\oplus q$ signifies the \textit{nim-sum} or \textit{exclusive or} of $p$ and $q$. It is obtained 
  by writing $p$ and $q$ in binary and adding in binary without carrying. 
  The value of the disjunctive sum $F + H$ is given by 
  $$\mathcal{G}(F+H) = \mathcal{G}(F)\oplus \mathcal{G}(H).$$ 

For mis\`ere play, the Grundy values are defined the same as normal play: 
 $$\mathcal{G}(H)=\mbox{mex}\{ \mathcal{G}(H')\mid H' \mbox{ is an option of } H\},$$
except $\mathcal{G}(\emptyset)=1$.
It is still true that $$\mathcal{G}(G) = 0 \mbox{ if and only if $G$ is a $\mathcal{P}$-position}$$
but it is no longer true that $\mathcal{G}(G+H)=\mathcal{G}(G)\oplus\mathcal{G}(H)$.
Mis\`ere play is trickier than normal play except when the game is equivalent to playing Nim, as
is the case in this game,
and an extension of Grundy values, called the \textit{genus}, is required.

 Under both winning conventions, 
 the  Grundy value is also called the nim value since if $\mathcal{G}(G)=k$, then playing in $G$ is equivalent to playing in a Nim heap of size $k$. The context will always ensure there is no confusion between the two functions.

\section{The Case of the Destined Coins}\label{sec:li}

There are $l$ coins that are destined to be lighter and $h$ coins that are destined to be heavier, where $l+h=N$. We denote  this game as $D_{l,h}$ for normal play and $DM_{l,h}$ for mis\`ere play. When $l=N$ we are looking for a lighter counterfeit coin. When $h = N$, we are looking for a heavier counterfeit coin.

Initially any of the coins could be counterfeit. Each new weighing excludes some coins from the list of potentially counterfeit coins. If a weighing is a balance, then all the coins on the scale must be real, while coins off the scale keep their destinies. If a weighing is an imbalance, then the lighter-destined coins that are on the heavier pan, the heavier-destined coins on the lighter pan, and the coins that are not on the scale must be real, while other coins keep their destinies.

\textbf{Example $N=2$.} If $l=h=1$, the only possible weighing doesn't reveal any additional information. In the future we exclude this case from our consideration, as it's impossible to determine the counterfeit coin. If $l=N=2$, the only possible weighing reveals the counterfeit coin.

\textbf{Example $N=3$.} We can only weigh one coin against one coin. These two coins must share a destiny to reveal information. In any case, after this weighing the Observer would know the identity of the counterfeit coin.

\subsection{Extra coin}

We start with a simpler case when there is an extra real coin. We denote this game as $D_{l,h}^+$ for normal play and $DM_{l,h}^+$ for mis\`ere play.

\begin{lemma}
The game $D_{l,h}^+$ is equivalent to the normal play of the game of Nim with one pile of size $l+h-1$. The game $DM_{l,h}^+$ is equivalent to the mis\`ere play of the game of Nim with one pile of size $l+h-1$.
\end{lemma}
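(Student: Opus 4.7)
The plan is to induct on $N = l + h$ and show that a $D^+$ position is determined up to equivalence by $N$, the count of still-possibly-counterfeit destined coins, since an extra real coin is always available and newly-excluded coins are themselves interchangeable real coins. The base case is $N = 1$: the sole destined coin must be the counterfeit, the Observer already has full information, and the position is terminal. Being terminal in normal play gives Grundy $0$, matching Nim on a pile of size $0$; being terminal in mis\`ere gives Grundy $1$ by convention, again matching Nim.

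For the inductive step I would separately verify an upper-bound claim and an achievability claim on the options. First, every legal weighing must exclude at least one possibly-counterfeit coin (by the information-revealing requirement) and must leave the counterfeit itself unexcluded, so the resulting position has $N'$ destined coins with $1 \le N' \le N - 1$; since all excluded coins (together with the old extra) are known real, the position is still of $D^+$ type. Second, for any target $j$ with $1 \le j \le N - 1$, I would exhibit an explicit move reaching $N' = j$: pick any $N - j$ destined coins, split them evenly between the two pans, using the extra real coin on one pan to restore parity when $N - j$ is odd, and declare the weighing balanced. This outcome is consistent because the counterfeit is permitted to lie among the $j \ge 1$ destined coins left off the scale, and the move excludes exactly the chosen $N - j$ coins.

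Combining these claims with the inductive hypothesis, the options' Grundy values form exactly the set $\{0, 1, \ldots, N - 2\}$ in normal play, so the $\mbox{mex}$ is $N - 1$. The same conclusion holds in mis\`ere: for $N = 2$ the only option has Grundy $1$ and the $\mbox{mex}$ is $0$; for $N \ge 3$ the options contribute the value set $\{1, 0, 2, 3, \ldots, N - 2\} = \{0, 1, \ldots, N - 2\}$, again with $\mbox{mex}$ equal to $N - 1$. These match the mis\`ere Nim values of a pile of size $N - 1$. I expect the main obstacle to be the upper-bound step, where unbalanced weighings break into sub-cases depending on how the two destiny types are distributed across the pans; one must check that in every consistent outcome at least one destined coin is excluded and at least one survives, so $N'$ never falls outside the prescribed range. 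Once that case-analysis is complete, the explicit balanced-weighing construction finishes achievability at once, and the induction closes uniformly for both play conventions.
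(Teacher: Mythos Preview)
Your proposal is correct and uses the same core idea as the paper: the achievability direction is handled exactly as the paper does it, by selecting $N-j$ genuine destined coins, padding with the extra real coin when $N-j$ is odd, and exhibiting a balance. The paper's proof is in fact only that one sentence of achievability; you add the upper-bound argument (every legal weighing must exclude at least one and at most $N-1$ destined coins, and the resulting position is again of $D^+$ type) and the explicit inductive Grundy computation, both of which the paper leaves implicit.
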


\begin{proof}
In one move, a player can exclude any number of coins up to $N-1$. The player can pick real coins that he wants to exclude, and if the number is odd, add the extra real coin. Then the player puts these coins on the scale and demonstrates that the pans balance.
\end{proof}

In the normal play with an extra coin, the first player wins in one move by excluding all the real coins. In the mis\`ere play the first player wins by excluding all but two coins. The lemma also provides the Grundy values for the game:
\[ \mathcal{G}(D_{l,h}^+) = l+h-1 = N-1 \quad \text{ and } \quad  \mathcal{G}(DM_{l,h}^+) = l+h-2= N-2.\]

\subsection{No extra coin}

Let us go back to the game without the extra coin.

We need to study the first weighing. After the first weighing, some of the coins get excluded, and we are in the situation with extra real coins.

\begin{lemma}
The following list describes the total number of coins that can stay destined after the first weighing from the starting position in the game $D_{l,h}$:
\begin{itemize}
    \item For any $N=l+h$: any number of destined coins that is the same parity as $N$.
    \item For $N=2l$: any number of destined coins up to $N-2$.
    \item For $N \neq 2l$: any number of destined coins that is less than $N$ and not more than $\lfloor N/2\rfloor + \min\{l,h\}$ except:
    \item For even $N$ and  $\min\{l,h\} =1 $: we can't reach 1 destined coin.
\end{itemize} 
\end{lemma}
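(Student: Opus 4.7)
My plan is to split the first-move analysis into the balanced and imbalanced cases, tabulate the resulting number $r$ of destined coins, and merge. The argument must hold whatever the destiny of the actual counterfeit, so I aim to exhibit a realizing configuration in both the light-destined and heavy-destined scenarios.

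For a balanced first weighing, the counterfeit must be off the scale, so the $2k \ge 2$ coins on the scale are real and become excluded. There are $N-1$ real coins available, so $k$ ranges over $\{1, \ldots, \lfloor (N-1)/2 \rfloor\}$ and $r = N - 2k$. These are exactly the positive values of the same parity as $N$ that are at most $N-2$, which proves the first bullet.

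For an imbalanced first weighing, let $a, b, c, d$ count the light-destined and heavy-destined coins on the lighter pan and the light-destined and heavy-destined coins on the heavier pan, respectively. The pan constraints are $a + b = c + d = k \ge 1$, $a + c \le l$, and $b + d \le h$, and the surviving destined coins are exactly the $a$ light-destined on the lighter pan together with the $d$ heavy-destined on the heavier pan, so $r = a + d$. Elementary manipulations give
\[
 r \le \min(l,k) + \min(h,k) \le \min(l,h) + \lfloor N/2 \rfloor,
\]
together with $r \le N-1$ since some coin must be excluded. Because the counterfeit must lie among the surviving destined coins, a light-destined counterfeit forces $a \ge 1$ and a heavy-destined one forces $d \ge 1$; for each $r$ in the allowed range I construct one tuple realizing it with $a \ge 1$ and another with $d \ge 1$, taking $k = 1$ for small $r$, $k = \lfloor N/2 \rfloor$ for $r$ near the top, and an intermediate $k$ in between. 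The tight case $N = 2l$ also requires me to rule out $r = N-1$: the identity $a + d = N - 1$ forces $\{a, d\} = \{l, l-1\}$ up to swapping, and the supply inequalities together with $a + b = c + d$ then push $b$ or $c$ to $-1$.

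The main obstacle is the explicit exception. When $N$ is even and $\min(l,h) = 1$, say $l = 1$, the value $r = 1$ has the wrong parity to arise from a balance. For an imbalance with a light-destined counterfeit, $a \ge 1$ and $a + d = 1$ force $a = 1$, $d = 0$; then $a + c \le 1$ forces $c = 0$ and the balance equation $b = c + d - a$ yields $b = -1$, impossible. Hence $r = 1$ is not attainable against every counterfeit, agreeing with the final bullet. All other values listed in the lemma are witnessed by the explicit imbalance constructions, completing the classification.
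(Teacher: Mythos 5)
Your decomposition into a balance case and an imbalance case, with the imbalance parametrized by the four counts $(a,b,c,d)$, is essentially the paper's own argument (the paper uses parameters $i$ and $j$ with, in effect, $b=0$: no heavy-destined coins on the lighter pan). On the impossibility side you are actually more careful than the paper: you derive the upper bound $r\le \min(l,k)+\min(h,k)\le \lfloor N/2\rfloor+\min\{l,h\}$, and you rule out $r=N-1$ when $N=2l$ by showing the supply constraints force $b=-1$ or $c=-1$, whereas the paper only observes that its particular construction fails at $i=l$. Your parity analysis of the balance case and your treatment of the exceptional case ($N$ even, $\min\{l,h\}=1$, $r=1$) are also correct and match the paper.

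The gap is in the positive direction, which is the real content of the lemma. You write that ``for each $r$ in the allowed range I construct one tuple realizing it with $a\ge 1$ and another with $d\ge 1$, taking $k=1$ for small $r$, $k=\lfloor N/2\rfloor$ for $r$ near the top, and an intermediate $k$ in between,'' but no tuples are exhibited, and the sketch as stated cannot be right: with $k=1$ you have $r=a+d\le 2$, so $k=1$ does not cover all ``small'' $r$. The point of the lemma is precisely that the system $a+b=c+d=k\le\lfloor N/2\rfloor$, $a+c\le l$, $b+d\le h$, $a+d=r$, with $a\ge1$ (resp.\ $d\ge1$) when the counterfeit is light-destined (resp.\ heavy-destined), is solvable for every $r$ up to the stated bound; this is where the coin supplies bite and where the exceptions arise. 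The paper discharges this by writing down an explicit family ($i$ light-destined coins against $j$ heavy-destined plus $i-j$ light-destined, giving $r=i+j$) and checking its feasibility. You need an analogous explicit choice of $(a,b,c,d)$ as a function of $r$, $l$, $h$, with the inequalities verified; without it, the realizability of the whole claimed range is only asserted, not proved.
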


\begin{proof}
We can pick any even number of real coins and put them on the scale to balance, thus excluding them. This way we can reach any number of destined coins of the same parity as $N$.

Suppose $N=2l$. We already know how to get to any even number of destined coins. To get to an odd number $2i-1$, we can put $i$ coins with lighter destiny on the left pan. On the right pan we put $i-1$ coins with heavier destiny and one coin with lighter destiny. This weighing is possible if $i< l$. If $i=l$ we do not have enough $l$-coins.

Suppose $N \neq 2l$. Without loss of generality assume that $l > h$. In our weighing we put $i$ coins with lighter destiny on the left pan and on the right pan we put $j$ coins with heavier destiny and $i-j$ coins with lighter destiny. After the weighing the number of destined coins is $i+j$. This weighing is possible if $i \leq \lfloor N/2\rfloor$, and $j \leq h$.

Looking at the smallest number we can reach, we see that if our fake coin shares destiny with another coin, we can reach one destined coin by comparing these two coins. If it doesn't share, the only way to reach one destined coin is to exclude all the coins but the fake one. That means the number of other coins must be even, and we can balance all the other coins against each other.
\end{proof}

To calculate the Grundy values we need to know the smallest number of destined coins that is unreachable in one move.

\begin{corollary}
The smallest number of destined coins that is unreachable in one move is the following:
\begin{itemize}
\item For $N=2l$, it is $N-1$;
\item Otherwise, it is $\lfloor N/2\rfloor + \min\{l,h\}$ plus 1 or 2, whichever provides a different parity from $N$, except:
\item If $N$ is even and $\min\{l,h\} = 1$ we can't reach 1.
\end{itemize}
\end{corollary}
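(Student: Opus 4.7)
The plan is to read the answer directly off the preceding Lemma by case analysis. In each case the Lemma specifies the set of reachable destined counts explicitly, so the corollary asks only for its minimum gap, and the proof reduces to identifying that gap.

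In the case $N=2l$, the Lemma makes every integer from $0$ up to $N-2$ reachable (the same-parity-of-$N$ bullet is strictly weaker here and contributes nothing new). Hence the first omission is $N-1$, matching the corollary.

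In the case $N \neq 2l$, set $T := \lfloor N/2 \rfloor + \min\{l,h\}$. Since $l \neq h$ together with $l+h=N$ forces $\min\{l,h\} < N/2$, we obtain $T < N$, so $T$ is itself a legitimate count. The Lemma then tells us the reachable set equals $\{0,1,\ldots,T\} \cup \{k < N : k \equiv N \pmod 2\}$. Every integer up to $T$ is present, and above $T$ only integers sharing the parity of $N$ appear. Consequently the smallest gap is the smallest integer exceeding $T$ whose parity differs from that of $N$, namely $T+1$ if $T \equiv N \pmod 2$ and $T+2$ otherwise---exactly the corollary's formula.

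Finally, in the exceptional regime ($N$ even and $\min\{l,h\}=1$), the Lemma explicitly removes $1$ from the reachable set. I will check that $1$ lies strictly below the generic answer: in this regime $T \geq \lfloor N/2 \rfloor + 1 \geq 2$, so $1 < T+1 \leq T+2$, and the exception therefore overrides the generic formula, yielding $1$ as the smallest unreachable value.

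The argument is essentially bookkeeping; the only points requiring any care are the parity case-split above $T$ and verifying the inequality $T < N$ in the non-$2l$ case (so that $T+1$ is a meaningful count), together with the small comparison $1 < T+1$ in the exceptional regime.
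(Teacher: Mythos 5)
The paper states this corollary without proof, so your strategy of reading the minimum gap directly off the preceding Lemma is the intended one, and your treatment of the $N=2l$ bullet and of the exceptional regime is fine. The gap is in the generic second bullet. Writing $T=\lfloor N/2\rfloor+\min\{l,h\}$, your conclusion that the smallest unreachable count is the least integer above $T$ of parity opposite to $N$ rests on every integer above $T$ of the \emph{same} parity as $N$ being reachable. But by your own (correct) reading of the Lemma, the parity part of the reachable set is $\{k<N: k\equiv N \pmod 2\}$; the value $N$ itself is never reachable, since a weighing that leaves all $N$ coins destined would have to be an imbalance with every coin on its ``correct'' pan, whose outcome is predetermined and so reveals nothing to the Observer. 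You verified $T<N$ but not $T+1<N$, and the latter can fail: $T=N-1$ occurs exactly when $\min\{l,h\}=\lceil N/2\rceil-1$ with $l\neq h$, e.g.\ $N=3$, $\{l,h\}=\{1,2\}$ (so $T=2$) or $N=5$, $\{l,h\}=\{2,3\}$ (so $T=4$). There $T\not\equiv N\pmod 2$, your argument declares $T+1=N$ reachable and returns $T+2=N+1$, whereas the reachable set is $\{1,\dots,N-1\}$ and the true answer is $N$.

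This is not a defect you can repair by tightening the write-up: the corollary as stated is wrong in that regime and needs a further exception (answer $N$ when $\lfloor N/2\rfloor+\min\{l,h\}=N-1$, $l\neq h$), and the error propagates to the Grundy recipe that follows --- for $D_{2,1}$ the options have values $0$ and $1$, so $\mathcal{G}(D_{2,1})=2$, while the stated formula would give $4-1=3$. A smaller cosmetic point: you include $0$ in the reachable set, but zero destined coins is never attainable since the counterfeit coin always retains its destiny; this does not change the answer because the corollary implicitly ranges over positive counts, but it should be said explicitly.
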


After the first weighing we have at least one excluded coin. Also the Grundy values for the case when there is an extra coin are consecutive values. Therefore, the Grundy value of our game equals the Grundy value of the game with one extra coin and the number of destined coins being the smallest number we can't reach.

Now we describe $\mathcal{P}$-positions. Remember that the case $l=1$, $N=2$ is not playable.

\begin{corollary}
In normal play the destined game has $\mathcal{P}$-position $N=1$. In addition, the $\mathcal{P}$-positions correspond to even $N$, when $l=1$ or $h=1$.

In mis\`ere play, the $\mathcal{P}$-positions are $N=2$ or $N=3$, the rest are $\mathcal{N}$-positions.
\end{corollary}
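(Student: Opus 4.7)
The plan is to exploit the remark immediately preceding the Corollary: any legal first move from $D_{l,h}$ lands in a $D^+$ or $DM^+$ situation with some $k$ destined coins, where $k$ runs over the reachable set $R$ described by the previous lemma. The extra-coin lemma then pins down the Grundy value of each option. For normal play, the option at $k$ destined coins has Grundy value $k-1$, so $\mathcal{G}(D_{l,h}) = m-1$, where $m$ is the least positive integer missing from $R$; equivalently, $D_{l,h}$ is a $\mathcal{P}$-position iff $1\notin R$ or $R=\emptyset$. For mis\`ere play, the option at $k\ge 2$ has mis\`ere-Grundy $k-2$, while $k=1$ is a terminal option of mis\`ere-Grundy $1$, so $DM_{l,h}$ is a $\mathcal{P}$-position iff $2\notin R$.

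For the normal-play statement I would then run a short case analysis on $N$. The case $N=1$ is immediate: no informative weighing exists, so $R=\emptyset$ and $\mathcal{G}=0$. For $N\ge 2$ odd, the same-parity bullet of the previous lemma places $1\in R$. For $N\ge 2$ even, $1\le \lfloor N/2\rfloor+\min\{l,h\}$, so bullet~3 places $1\in R$ except in the single case flagged by the lemma's exception: even $N$ with $\min\{l,h\}=1$. This is precisely the claimed $\mathcal{P}$-family $\{N=1\}\cup\{(l,h):N\text{ even},\min\{l,h\}=1\}$.

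For the mis\`ere half I would perform the analogous case analysis with $2$ in place of $1$. When $N\ge 4$, either $N$ is even and the same-parity bullet forces $2\in R$, or $N$ is odd and $\lfloor N/2\rfloor\ge 2$ so that bullet~3 forces $2\in R$; either way no $N\ge 4$ position is $\mathcal{P}$. The remaining cases $N\in\{1,2,3\}$ I would handle by direct inspection, using the lemma's enumeration of $R$ to pin down which configurations omit $2$ and are therefore the claimed $\mathcal{P}$-positions.

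The hardest part will be the mis\`ere bookkeeping: one must carefully distinguish the terminal single-coin option (mis\`ere-Grundy $1$) from non-terminal options (mis\`ere-Grundy $k-2$), and translate the $\mathcal{P}$-condition into the clean form $2\notin R$ via a mex computation. Once the two extra-coin lemmas are in place, no calculation beyond parity and bound checks on $R$ is required, so the argument is conceptually light and reduces entirely to reading off the previous lemma.
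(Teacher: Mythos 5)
Your proposal is correct in outline but takes a genuinely different route from the paper. You derive the outcome classes from the Grundy machinery: every option of $D_{l,h}$ is an extra-coin position whose (normal or mis\`ere) Grundy value is determined by its number $k$ of destined coins, so the whole question reduces to whether $1$ (normal) or $2$ (mis\`ere) lies in the reachable set $R$ of the preceding lemma. That reduction, including the mis\`ere bookkeeping where the $k=1$ option is terminal with value $1$ while $k\ge 2$ options have value $k-2$, is sound and is essentially a formalization of the remark just before the corollary. The paper instead proves the corollary directly and constructively: it exhibits an explicit winning first move in each $\mathcal{N}$-case (compare the counterfeit with a real coin of the same destiny in normal play; with a coin of opposite destiny, or bury it in a $2$-vs-$2$ weighing, in mis\`ere play). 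Your approach buys uniformity and reuses earlier results; the paper's buys an explicit strategy and, importantly, keeps track of \emph{where the counterfeit coin is}, which your reduction quietly discards.

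That last point is where your plan needs care. The reachable set $R$ is not a function of $(l,h)$ alone: whether $1\in R$ depends on whether the counterfeit shares a destiny with another coin (the lemma's exception ``even $N$, $\min\{l,h\}=1$'' is really the statement that the counterfeit is the unique coin of its destiny and the remaining coins are odd in number), so your normal-play case analysis must carry that extra datum, as the paper's proof explicitly does. More seriously, in the mis\`ere half your own criterion contradicts the corollary's literal wording at $N=3$ with $\min\{l,h\}=1$: there $\lfloor N/2\rfloor+\min\{l,h\}=2$, so $2\in R$ and the position is $\mathcal{N}$ (indeed the paper's proof says the first player wins whenever coins of both destinies are present), whereas the statement declares all of $N=3$ to be $\mathcal{P}$. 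Your deferred ``direct inspection of $N\le 3$'' will surface this discrepancy and must resolve it by restricting the $N\in\{2,3\}$ $\mathcal{P}$-positions to the single-destiny configurations; as written, the proposal does not yet commit to that resolution.
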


\begin{proof}
If $N>1$ in normal play the first player wins by comparing the counterfeit coin with any real coin of the same destiny. If the counterfeit coin is the only coin of the same destiny and the number of coins of the opposite destiny is even, then the player can exclude all other coins. Otherwise, the counterfeit coin is the only coin, and it's a $\mathcal{P}$-position.

In mis\`ere play if there are coins of different destinies, the first player wins by weighing the counterfeit coin against a coin of opposite destiny. If coins have the same destinies and $N>3$, the first player wins by comparing two coins against two coins and putting the counterfeit coin among them.
\end{proof}

Here are important examples.

\textbf{Example.} The classical coin-weighing problem has one counterfeit coin that is lighter. In honor of this problem we consider the case when $l=N$. In this case $\min\{l,h\} = 0$, and the smallest number of destined coins we can't reach in one weighing is the smallest number exceeding $\lfloor N/2\rfloor$ that differs in parity from $N$. The Grundy values for normal play and mis\`ere play are presented in Table~\ref{tab:destinedGrundy}.

\begin{table}[h!]
  \centering
\begin{tabular}{|c|c|c|c|}
\hline
  $N$   & can't reach & $\mathcal{G}(D_{N,0})$ & $\mathcal{G}(DM_{N,0})$ \\
 \hline
 $N = 4k$ & $2k+1$  & $2k$ & $2k-1$\\
  $N = 4k+1$ & $2k+2$  & $2k+1$ & $2k$\\
   $N = 4k+2$ & $2k+3$  & $2k+2$ & $2k+1$\\
    $N = 4k+3$ & $2k+2$  & $2k+1$ & $2k$\\
 
 \hline
\end{tabular}
  \caption{Grundy values, destined coins all lighter}
  \label{tab:destinedGrundy}
\end{table}

\textbf{Example.} In the next section we need another example when $l = h$. The Grundy values for normal play and mis\`ere play are presented in Table~\ref{tab:destinedGrundylh}.

\begin{table}[h!]
  \centering
\begin{tabular}{|c|c|c|c|}
\hline
  $N$   & can't reach & $\mathcal{G}(D_{l,l})$ & $\mathcal{G}(DM_{l,l})$ \\
 \hline
 $N = 2l$ & $2l-1$  & $2l-2$ & $2l-3$\\
 \hline
\end{tabular}
  \caption{Grundy values, destined coins $l=h$}
  \label{tab:destinedGrundylh}
\end{table}

\section{The Case of the Unknown Coins}\label{sec:hlf}

In this variant of the game, the counterfeit coin can be heavier or lighter, and the Observer either needs to find it or to find and identify it. Given $N$ unknown coins at the beginning, we denote the find game $F_N$, and the find-identify game $FI_N$ for normal play. The mis\`ere play is denoted $FM_N$ and $FIM_N$ correspondingly.

If a weighing is a balance, it excludes the coins on the scale. If a weighing is an imbalance, it excludes the coins that are not on the scale. Moreover, the coins on the scale acquire a destiny. 

There cannot be destined and unknown coins at the same time. In addition, as soon as an imbalance happens and the game turns to destined coins, the game stays as the game of destined coins. Moreover, as soon as destined coins appear, the find and find-identify games become equivalent. This means the Grundy values for the position after the imbalance are the same as Grundy values for the game of destined coins.

\subsection{Extra coin}

As before we first consider the case when there is an extra coin known to be real.  We denote the find game with the extra coin $F_N^+$, and the identify game as $FI_N^+$ for normal play, and $FM_N^+$ and $FIM_N^+$ for mis\`ere play.

Consider the first move.

\begin{lemma}
From the starting position, there is a move to any positive number of excluded coins not exceeding $N-1$ and the rest are unknown, and to any positive number of destined coins not exceeding $N$. There are no other moves.
\end{lemma}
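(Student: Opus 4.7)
The plan is to classify the first weighing by whether it balances or not, and to enumerate the post-weighing states that each case produces. Every legal weighing places the same number of coins on each pan, and the extra known-real coin is available to sit on one of the pans (or to stay off). Since the players have full information, they know in advance whether any chosen weighing will balance.

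For the balanced case, a balance certifies that no unknown on the scale is counterfeit, so every unknown on the scale becomes excluded and the remaining unknowns keep their status. To realize $k$ excluded coins for each $1\le k\le N-1$, I split on the parity of $k$: if $k$ is even, place $k/2$ unknowns on each pan without using the extra; if $k$ is odd, place $(k-1)/2$ unknowns together with the extra on one pan and $(k+1)/2$ unknowns on the other. Because $k\le N-1$, at least one unknown remains off the scale, so the counterfeit can sit there and the weighing is genuinely balanced. Exclusion of all $N$ unknowns via a balance is impossible, because then the counterfeit would be on the scale and would force an imbalance.

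For the imbalanced case, the imbalance certifies that the counterfeit is on the scale; coins off the scale are excluded, and every unknown on the scale acquires a destiny (light on the lighter pan, heavy on the heavier). To realize $d$ destined coins for each $1\le d\le N$, I again split on the parity of $d$: if $d$ is even, put $d/2$ unknowns on each pan; if $d$ is odd, put $(d-1)/2$ unknowns plus the extra on one pan and $(d+1)/2$ unknowns on the other. The player places the counterfeit, whose location he knows, on the scale, which forces the imbalance.

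To finish, I would observe that every move is a weighing and hence either balances or not, so the reachable positions are exactly those listed; moreover, only weighings that exclude at least one coin or destine at least one coin reveal information, so no other outcome is a legal move. The main obstacle is the bookkeeping around use of the extra coin to achieve odd counts, together with checking that at the extreme values of $k$ and $d$ there are enough unknowns available to perform the claimed weighing.
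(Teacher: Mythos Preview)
Your proof is correct and follows essentially the same approach as the paper: split into the balanced and unbalanced cases, and use the extra coin to reach the odd counts in each. Your write-up is in fact more complete than the paper's terse argument, since you explicitly verify the boundary cases (why $k=N$ is impossible for a balance, why the counterfeit must be placed on or off the scale as appropriate) and confirm that no other outcomes can arise.
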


\begin{proof}
If the first move is a balance with $m$ coins on each pan, we exclude $2m$ coins on both pans, the rest staying unknown. If we use the extra coin on the scale, we can exclude any odd number of coins.

If the first move is an imbalance with $m$ coins on each pan, the coins on the lighter pan get the lighter destiny, the coins on the heavier pan get the heavier destiny. If we do not put the extra coin on the scale, we get an even number of destined coins. If we put the extra coin on the scale, we get an odd number of destined coins.
\end{proof}

This allows us to find the Grundy values recursively. If we get to the destined coins at some point, then the find and find-identify games are equivalent. Indeed, if the coins have destinies, as soon as the counterfeit coin is found, its destiny provides the identification.

Now we look at the initial values for the recursion.

\textbf{One unknown coin and one extra real coin.} One unknown coin is a terminal position in games $F^+$ and $FIM^+$, one move away from a terminal position in $FI^+$, and impossible in $FM^+$. That means, \[\mathcal{G}(F_1^+)= 0 \quad \mathcal{G}(FI_1^+) = 1 \quad \mathcal{G}(FIM_1^+) = 1.\]

\textbf{Two unknown coins and one extra real coin.} There are three different moves that provide more information:
\begin{enumerate}
\item compare the counterfeit coin to the extra coin,
\item compare the real unknown coin to the extra coin,
\item compare two unknown coins to each other.
\end{enumerate}

The first move leads to finding and identifying the fake coin. The second move leads to finding but not identifying the fake coin. The third move leads to two destined coins with one coin having the lighter destiny and the other coin having the heavier destiny.

In Table~\ref{tab:2plus} we show Grundy values after each move for each game.

\begin{table}[h!]
  \centering
  \begin{tabular}{|c|c|c|c|c|}
  \hline
move & $F_2^+$   & $FI_2^+$ & $FM_2^+$ & $FIM_2^+$ \\
 \hline
1 & 0 & 0 & N/A & N/A    \\
2 & 0 & 1 & N/A     &  $1$  \\
3 & 1 & 1 & $1$ &       $1$   \\
\hline
  \end{tabular}
  \caption{Grundy values for moves from two unknown coins, extra coin}
  \label{tab:2plus}
\end{table}

Therefore,

\[\mathcal{G}(F_2^+)= 2 \quad \mathcal{G}(FI_2^+) = 2 \quad \mathcal{G}(FM_2^+) = 1 \quad \mathcal{G}(FIM_2^+)= 1.\]

Now we can describe all Grundy values.

\begin{lemma}
For $N > 1$, the games $F_N^+$ and $FI_N^+$ are equivalent to the normal play of the game of Nim
with one pile of size $N$. The games
$FM_N^+$ and $FIM_N^+$ are equivalent to the mis\`ere play
of the game of Nim with one pile of size $N$.
\end{lemma}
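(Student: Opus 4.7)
My plan is induction on $N$, with the base case $N=2$ handled by Table~\ref{tab:2plus} and the explicit computation $\mathcal{G}(F_2^+)=\mathcal{G}(FI_2^+)=2$ and $\mathcal{G}(FM_2^+)=\mathcal{G}(FIM_2^+)=1$ that precedes the lemma, already matching the Grundy values of a size-$2$ Nim pile in the two conventions.

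For the inductive step I will appeal to the preceding first-move lemma to enumerate the options of $F_N^+$ (and in parallel those of $FI_N^+$, $FM_N^+$, $FIM_N^+$). Each first move falls into exactly one of two categories: (a)~a balanced weighing that excludes some $k \in \{1,\ldots,N-1\}$ of the unknown coins and lands in the smaller unknown-coin position $F_{N-k}^+$ (respectively $FI_{N-k}^+$, $FM_{N-k}^+$, $FIM_{N-k}^+$); or (b)~an imbalanced weighing that assigns destinies to some $m \in \{1,\ldots,N\}$ coins and lands in the destined-coin position $D_{l,h}^+$ (respectively $DM_{l,h}^+$) with $l+h=m$. The inductive hypothesis supplies the Grundy values for the type-(a) options, while the destined-coin lemma of Section~\ref{sec:li} gives $m-1$ for the type-(b) options in normal play and $m-2$ in mis\`ere play. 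In normal play the type-(b) options alone already supply the full spectrum $\{0,1,\ldots,N-1\}$ as $m$ varies over $\{1,\ldots,N\}$, so $\mathrm{mex}=N$ and the equivalence to a normal Nim pile of size $N$ follows.

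The mis\`ere analysis proceeds identically, except that first moves which immediately hand the Observer the full target information must be removed from the mex computation: in $FM_N^+$ these are the type-(a) moves with $N-k=1$ (the lone remaining unknown is visibly the counterfeit) together with the type-(b) moves with $m=1$ (the lone destined coin is the counterfeit), while in $FIM_N^+$ only the type-(b) moves with $m=1$ are removed, since the lone unknown of $FIM_1^+$ still hides its destiny. After this pruning the type-(b) options contribute exactly $\{m-2 : 2 \le m \le N\} = \{0,1,\ldots,N-2\}$, and the surviving type-(a) options add nothing beyond this range by the inductive hypothesis, so $\mathrm{mex}=N-1$ and the claimed mis\`ere Nim-equivalence follows.

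The main obstacle is the small-$N$ bookkeeping: one must verify that at the boundary of the induction the type-(a) option landing in $FIM_1^+$ and the type-(b) option landing in $DM_{1,1}^+$ contribute Grundy values lying inside $\{0,1,\ldots,N-2\}$ rather than inflating the mex. This reduces to the destined-coin lemma of the previous section applied at the endpoint $l+h=2$, together with the explicit base values computed just before the lemma, after which the induction closes cleanly.
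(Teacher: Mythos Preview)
Your proof is correct and follows essentially the same inductive route as the paper: enumerate the first-move options via the preceding lemma, observe that the imbalanced (type-(b)) moves alone already furnish the full range $\{0,\ldots,N-1\}$ of Grundy values from the destined-coin lemma, and check that the balanced (type-(a)) options, by the inductive hypothesis, contribute nothing new. The paper's own proof is terser---it treats only the normal case explicitly and dismisses mis\`ere with ``a similar argument works''---whereas you spell out the mis\`ere bookkeeping (pruning the terminal-revealing moves and verifying the boundary contributions of $FIM_1^+$ and $DM_{1,1}^+$), which is a welcome addition rather than a different method.
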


\begin{proof}
We use induction. The base case was covered in the examples above.

Let us look at the normal play. If we start with $N$ unknown coins, there exist moves to any number of destined coins up to $N$. That means there are moves to positions with Grundy values from 0 to $N-1$ inclusive. By the inductive hypothesis, all other moves to a smaller number of unknown coins have Grundy values in the same range. Therefore, the minimal excludant is $N$. A similar argument works for mis\`ere play.
\end{proof}

The Grundy values are combined in Table~\ref{tab:uG}. Notice that starting from $N=2$, the find and find-identify games are equivalent. If we allow only imbalances in our moves, the Grundy values won't change. The game with only imbalances is the same as the destined coin case with one extra move: the first move that assigns destinies.

\begin{table}[h!]
  \centering
  \begin{tabular}{|c|c|c|c|}
  \hline
 $F_N^+$   & $FI_N^+$ & $FM_N^+$ & $FIM_N^+$ \\
 \hline
 $N$   & $N$        &     $N-1$      &       $N-1$     \\
\hline
  \end{tabular}
  \caption{Grundy values: unknown case, extra coin}
  \label{tab:uG}
\end{table}

\begin{corollary}
For $N > 1$, the game can be won in one move.
\end{corollary}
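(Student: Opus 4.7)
The plan is to split on play convention and exhibit an explicit winning first move in each of the four games, then verify that this single move suffices.

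For the normal play games $F_N^+$ and $FI_N^+$, I would have the first player compare the counterfeit coin against the extra real coin in a 1-vs-1 weighing. This weighing is plainly information-revealing and therefore legal; it must come out unbalanced. Since the extra coin is known to be real, the Observer concludes that the single unknown coin on the scale is the counterfeit, and the direction of the imbalance reveals whether it is heavier or lighter. Hence the Observer obtains full find-and-identify information in one move, settling both $F_N^+$ and $FI_N^+$.

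For the mis\`ere games $FM_N^+$ and $FIM_N^+$, the first player's objective is different: leave the opponent in a position from which every legal weighing reveals the full information, so that the opponent is forced to reveal and lose. Using $N>1$, the first player weighs the counterfeit coin against any other unknown coin. The resulting imbalance gives opposite destinies to the two coins on the scale and excludes the remaining $N-2$ unknowns, leaving a position of the same shape as $DM_{1,1}^+$ with additional known-real coins. The key step is to check that this position is mis\`ere-terminal, so the game ends on the opponent's turn with the opponent losing.

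The main obstacle, and the only nontrivial part of the proof, is verifying that every legal weighing from the resulting $DM_{1,1}^+$-type state reveals the counterfeit. I would do this by enumeration: weighings involving only known-real coins yield no new information and are therefore not legal; a weighing that includes at least one of the two destined coins---whether a destined coin alone against a real, the two destined coins against each other, or a mixed configuration on both pans---has its outcome determined by which of the two destined coins is the counterfeit, so in each case the Observer pins down the answer. Once this enumeration is carried out, the opponent has no non-revealing continuation, the game ends on that next turn with the opponent losing, and the corollary follows.
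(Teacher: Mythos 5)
Your choice of first moves is exactly the paper's: in normal play, weigh the counterfeit coin against the known-real extra coin; in mis\`ere play, weigh it against another unknown coin, landing in a position of type $DM_{1,1}^+$. The one flaw is in your terminal-position enumeration: weighing the two oppositely-destined coins directly against each other does \emph{not} let the Observer pin down the counterfeit---the light-destined pan rises in both cases, whether its coin is the lighter counterfeit or the opposing coin is the heavier counterfeit---so that weighing (and likewise any mixed weighing keeping the two destined coins on opposite pans) reveals nothing and is therefore illegal, not revealing. This still supports your conclusion, just for a different reason: those moves are simply unavailable to the opponent, so every \emph{legal} continuation from the resulting position (a destined coin against a real one, or both destined coins on one pan against two reals) does reveal the counterfeit, and the mis\`ere win stands.
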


\begin{proof}
In normal play, the player compares the counterfeit coin against the extra coin. In mis\`ere play, the player compares the counterfeit coin against another unknown coin.
\end{proof}

\subsection{No extra coin}

Consider the first move.

\begin{lemma}
From the starting position, there is a move to any even number of excluded coins not exceeding $N-2$, and the rest are unknown, and to any even number of destined coins together with the rest of the coins being excluded. There are no other moves.
\end{lemma}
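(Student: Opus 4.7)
The plan is to enumerate the possible first weighings by outcome (balance or imbalance) and verify each matches one of the two clauses of the lemma. Any weighing places $m \geq 1$ coins on each pan; since the players have full information, they freely choose the counterfeit's placement to steer the weighing to either outcome, and these two are exhaustive.

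For a balance, the counterfeit must lie off the scale. By the first bullet of the excluded-coin definition, the $2m$ on-scale coins become excluded, while the $N - 2m$ off-scale coins retain their unknown status. Reading ``the rest are unknown'' in the plural sense---that is, requiring at least two genuinely unknown coins to remain rather than the single off-scale coin that would otherwise be logically pinned as the counterfeit---forces $N - 2m \geq 2$, so $2m \leq N - 2$; varying $m$ realizes every positive even count up to this bound. For an imbalance, the counterfeit is on the scale; by the destiny rules stated at the top of the section the $m$ lighter-pan coins acquire lighter destiny and the $m$ heavier-pan coins acquire heavier destiny, producing exactly $2m$ destined coins, and the fourth bullet of the excluded-coin definition puts the $N - 2m$ off-scale coins among the excluded. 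Letting $m$ run from $1$ to $\lfloor N/2 \rfloor$ hits every positive even count of destined coins. The ``no other moves'' clause follows because every weighing has the form $m$ against $m$ for some $m \geq 1$ and must end in a balance or an imbalance.

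I expect the main subtle point to be the interpretation of ``the rest are unknown'' in the balance case: for odd $N$ and $m = (N-1)/2$ the weighing leaves one off-scale coin that is neither destined nor excluded by the strict letter of the definitions but is logically forced to be the counterfeit. The lemma's bound of $N - 2$ rather than $N - 1$ is precisely what keeps this boundary case outside the first clause, and the bookkeeping of that boundary---together with the parity constraint that makes it relevant only for odd $N$---is the one place where care is needed; the rest of the argument is a direct translation of the balance/imbalance rules into the language of excluded, destined, and unknown coins.
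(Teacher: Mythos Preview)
Your case split by balance versus imbalance is exactly the paper's argument; the paper's own proof is in fact terser than yours and makes no attempt to justify the $N-2$ bound at all.

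Where you go astray is the ``plural'' reading of ``the rest are unknown'' to force $2m \le N-2$. For even $N$ the correct reason is simply that a balance requires the counterfeit to be off the scale, so at most $N-1$ real coins can be weighed, and parity then gives $2m \le N-2$. For odd $N$, however, balancing $(N-1)/2$ real coins against $(N-1)/2$ real coins is a perfectly legal informative move that excludes $N-1 > N-2$ coins and leaves a single off-scale coin; the paper itself lists this as the ``$1$ unknown'' row in its odd-$N$ move table and analyzes the one-unknown-coin position explicitly. So the lemma's bound is a mild understatement for odd $N$, and that boundary weighing does not fall under either clause as written---which means the ``no other moves'' sentence is not literally supported, and your argument for it inherits that gap. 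Your plural interpretation papers over the discrepancy rather than resolving it; the honest fix is to observe that the excluded-coin cap should really be $N-1$ (which coincides with $N-2$ for even $N$ by parity), not to reinterpret the word ``unknown.''
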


\begin{proof}
If the first move is a balance with $m$ coins on each pan, we exclude $2m$ coins on both pans, the rest staying unknown. If the first move is an imbalance with $m$ coins on each pan, the $2m$ coins on the scale acquire a destiny, and the rest of the coins are excluded.
\end{proof}

That means after the first move, we either get an extra coin or we get destined coins. In either case we know the Grundy values of all these positions. 

We consider the case when the total number of coins is even or odd separately. 

The Grundy value for resulting positions for the even case, $N =2k$, are presented in Table~\ref{tab:evenU}. Note that when we put all the coins on the scale, we do not have an extra real coin after that weighing.

\begin{table}[h!]
  \centering
  \begin{tabular}{|c|c|c|c|c|}
  \hline
move & $F_{2k}$   & $FI_{2k}$ & $FM_{2k}$ & $FIM_{2k}$ \\
 \hline
$2i$ unknown, $0 < 2i <N$ & $2i$ & $2i$ & $2i-1$ & $2i-1$    \\
$2i$ destined, $0 < 2i <N$  & $2i-1$ & $2i-1$ & $2i-2$     &  $2i-2$  \\
$2k$ destined & $2k-2$ & $2k-2$ & $2k-3$ &       $2k-3$   \\
\hline
  \end{tabular}
  \caption{$N=2k$ moves}
  \label{tab:evenU}
\end{table}

Applying the minimal excludant function to each column, we get the Grundy numbers:

\begin{lemma}
For $N = 2k$, the Grundy values are presented by Table~\ref{tab:evenUG}.
\begin{table}[h!]
  \centering
  \begin{tabular}{|c|c|c|c|}
  \hline
$F_{N}$   & $FI_{N}$ & $FM_{N}$ & $FIM_{N}$ \\
\hline
$0$ & $0$ & $N-2$ &       $N-2$   \\
\hline
  \end{tabular}
  \caption{Grundy values $N =2k$}
  \label{tab:evenUG}
\end{table}
\end{lemma}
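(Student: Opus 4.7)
The plan is to apply the mex operator to the Grundy values of the positions reachable in one move from $2k$ unknown coins, using the move enumeration from the preceding lemma together with the Grundy values already established in earlier sections. By that lemma, the possible first moves land in exactly three kinds of destination: (a) a position with $2i$ unknown coins and $2k-2i$ excluded real coins, for any $0<2i<2k$; (b) a position with $2i$ destined coins (necessarily with $l=h=i$, since an imbalanced weighing must put the same number of coins on each pan) and $2k-2i$ excluded real coins, for any $0<2i<2k$; or (c) the full imbalance producing $2k$ destined coins with $l=h=k$ and no excluded coins.

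I would fill in each row of Table~\ref{tab:evenU} by citing the appropriate prior result. A position of type (a) is equivalent to $F^+_{2i}$ (respectively $FI^+_{2i}$, $FM^+_{2i}$, $FIM^+_{2i}$) since the excluded coins act as extra real coins; the extra-coin unknown lemma gives Grundy values $2i$ in normal play and $2i-1$ in mis\`ere. A position of type (b) is $D^+_{i,i}$ (resp.\ $DM^+_{i,i}$), with Grundy value $2i-1$ in normal play and $2i-2$ in mis\`ere by the extra-coin destined lemma. A position of type (c) is $D_{k,k}$ (resp.\ $DM_{k,k}$), with Grundy value $2k-2$ in normal play and $2k-3$ in mis\`ere by Table~\ref{tab:destinedGrundylh}. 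These are precisely the entries of Table~\ref{tab:evenU}, so that table is justified entry-by-entry.

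It remains to take the mex of each column. For $F_{2k}$ and $FI_{2k}$, the set of option values is
\[
\{2,4,\ldots,2k-2\}\;\cup\;\{1,3,\ldots,2k-3\}\;\cup\;\{2k-2\}\;=\;\{1,2,\ldots,2k-2\},
\]
so the mex is $0$. For $FM_{2k}$ and $FIM_{2k}$, the set is
\[
\{1,3,\ldots,2k-3\}\;\cup\;\{0,2,\ldots,2k-4\}\;\cup\;\{2k-3\}\;=\;\{0,1,\ldots,2k-3\},
\]
so the mex is $2k-2=N-2$. This yields Table~\ref{tab:evenUG}.

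The main obstacle is really just the bookkeeping: checking that imbalanced weighings necessarily produce symmetric $l=h$ destinies, selecting the correct prior lemma or table for each of the three destination types, and verifying that the interleaving of odd and even option values collapses into a contiguous block (starting at $1$ in normal play and at $0$ in mis\`ere), which is what makes the mex immediate. The very small case $N=2$ ($k=1$) must be excluded, since then case (c) is the unplayable $D_{1,1}$ and no move exists; the statement is intended for $N\geq 4$.
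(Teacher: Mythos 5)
Your proposal is correct and follows essentially the same route as the paper: justify each row of Table~\ref{tab:evenU} from the extra-coin lemmas and the $D_{l,l}$ table, then take the mex of each column. The paper leaves the entry-by-entry justification and the mex computation implicit, so your write-up is just a more detailed version of the same argument (and your caveat about the degenerate case $N=2$ is consistent with the paper's earlier exclusion of $D_{1,1}$).
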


The Grundy value for resulting positions for the odd case, $N =2k+1$, are presented in Table~\ref{tab:oddU}. Note that whatever the first weighing, we always get an extra real coin. Also, one unknown coin has a separate formula.

\begin{table}[h!]
  \centering
  \begin{tabular}{|c|c|c|c|c|}
  \hline
move & $F_{2k+1}$   & $FI_{2k+1}$ & $FM_{2k+1}$ & $FIM_{2k+1}$ \\
 \hline
$2i+1$ unknown $1 < 2i+1 <N$ & $2i+1$ & $2i+1$ & $2i$ & $2i$    \\
$1$ unknown & $0$ & $1$ & N/A & $1$   \\
$2i$ destined, $0 < 2i <N$  & $2i-1$ & $2i-1$ & $2i-2$     &  $2i-2$  \\
\hline
  \end{tabular}
  \caption{$N=2k$ moves}
  \label{tab:oddU}
\end{table}

Applying the minimal excludant function to each column, we get the Grundy numbers:

\begin{lemma}
For $N = 2k+1$, the
Grundy values are presented by Table~\ref{tab:oddUG}.
\begin{table}[h!]
  \centering
  \begin{tabular}{|c|c|c|c|}
  \hline
$F_{N}$   & $FI_{N}$ & $FM_{N}$ & $FIM_{N}$ \\
\hline
$1$ & $0$ & $1$ &       $1$   \\
\hline
  \end{tabular}
  \caption{Grundy values $N =2k+1$}
  \label{tab:oddUG}
\end{table}
\end{lemma}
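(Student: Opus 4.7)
The plan is to repeat, for odd $N$, the same template used for the even-$N$ case: enumerate the options available from the starting position using the first-move lemma just proved, identify each resulting position as one of the games already analyzed, substitute the previously established Grundy values, and then apply mex column by column of Table~\ref{tab:oddU} to read off Table~\ref{tab:oddUG}.

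First, a balance with $j$ coins on each pan (for $j = 1, \ldots, k$) excludes $2j$ coins and leaves $2k+1 - 2j \in \{1, 3, \ldots, 2k-1\}$ unknown coins together with the excluded coins acting as extras; this produces one of the four extra-coin games $F^+_{2i+1}$, $FI^+_{2i+1}$, $FM^+_{2i+1}$, $FIM^+_{2i+1}$. An imbalance with $j$ on each pan produces $2j$ destined coins split evenly between lighter and heavier destinies plus the remaining coins as extras, giving $D^+_{j,j}$ or $DM^+_{j,j}$. The Grundy values of all of these are already in hand: Table~\ref{tab:uG} supplies $\mathcal{G}(F^+_{2i+1}) = \mathcal{G}(FI^+_{2i+1}) = 2i+1$ and $\mathcal{G}(FM^+_{2i+1}) = \mathcal{G}(FIM^+_{2i+1}) = 2i$ for $2i+1 \geq 2$; the ``one unknown coin and one extra real coin'' discussion fixes $\mathcal{G}(F^+_1) = 0$, $\mathcal{G}(FI^+_1) = 1$, $\mathcal{G}(FIM^+_1) = 1$, and records that the corresponding move is unavailable in $FM$; finally the destined-coin lemma gives $\mathcal{G}(D^+_{j,j}) = 2j-1$ and $\mathcal{G}(DM^+_{j,j}) = 2j-2$. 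Substituting populates every entry of Table~\ref{tab:oddU}.

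The remaining step is a finite mex computation: collect the set of Grundy values in each column of Table~\ref{tab:oddU} and verify that its minimum excluded value matches the corresponding entry of Table~\ref{tab:oddUG}. The main bookkeeping obstacle, and the one novelty compared with the even case, is the ``1 unknown'' row, whose entries do not follow the $2i+1$ or $2i-1$ arithmetic pattern of the other rows; these exceptional values can either fill a gap or leave one open in the option-set, and one must be especially careful with the $FM$ column (where the ``1 unknown'' move is simply absent) so as not to introduce a spurious Grundy value. Once the option-sets are correctly assembled, the four mex values drop out by direct inspection.
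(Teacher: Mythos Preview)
Your proposal is correct and follows exactly the paper's own argument: the paper's proof consists solely of the sentence ``Applying the minimal excludant function to each column, we get the Grundy numbers,'' and you have spelled out precisely that computation---enumerating the balance and imbalance options via the first-move lemma, identifying them as $F^+/FI^+/FM^+/FIM^+$ or $D^+/DM^+$ positions with extras, inserting the already-established Grundy values (including the exceptional $N=1$ entries), and taking mex. Your explicit flagging of the anomalous ``1 unknown'' row and the absent $FM$ option is exactly the bookkeeping care the paper's terse proof leaves implicit.
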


We find that normal play can be won in one move by comparing the counterfeit coin with a real coin. For mis\`ere play the first move is the same. It results in the terminal position with two coins of opposite destinies.

\section{Acknowledgments}

We are grateful to Games at Dal 2016 for getting us together to start this project and to the PRIMES program for allowing us to finish it.

\end{document}